\newcommand{\C}{\mathbb C}
\newcommand{\Q}{\mathbb Q}
\newcommand{\G}{\mathbb G}
\newcommand{\R}{\mathbb R}
\newcommand{\Vl}{V_{\ell}}
\newcommand{\Ql}{\Q_{\ell}}
\DeclareMathOperator{\alg}{alg}
\DeclareMathOperator{\GlK}{G_{\ell,K}^{\alg}}
\DeclareMathOperator{\GlKo}{G_{\ell,K,1}^{\alg}}
\DeclareMathOperator{\GlKe}{G_{\ell,K_e}^{\alg}}
\DeclareMathOperator{\GlKeo}{G_{\ell,K_e,1}^{\alg}}
\DeclareMathOperator{\gpast}{ast}
\DeclareMathOperator{\AST}{AST}
\DeclareMathOperator{\Aut}{Aut}
\DeclareMathOperator{\gpDL}{DL}
\DeclareMathOperator{\End}{End}
\DeclareMathOperator{\Gal}{Gal}
\DeclareMathOperator{\GL}{GL}
\DeclareMathOperator{\SL}{SL}
\DeclareMathOperator{\GSp}{GSp}
\DeclareMathOperator{\id}{id}
\DeclareMathOperator{\Ker}{Ker}
\DeclareMathOperator{\MMT}{MMT}
\DeclareMathOperator{\MT}{MT}
\DeclareMathOperator{\Hg}{Hg}
\DeclareMathOperator{\Sp}{Sp}
\DeclareMathOperator{\SU}{SU}
\DeclareMathOperator{\Nor}{N}
\DeclareMathOperator{\Uni}{U}
\DeclareMathOperator{\ST}{ST}
\newtheorem{theorem}{Theorem}[section]
\newtheorem{corollary}[theorem]{Corollary}
\newtheorem{conjecture}[theorem]{Conjecture}
\theoremstyle{definition}
\newtheorem{definition}[theorem]{Definition}
\newtheorem{example}[theorem]{Example}
\theoremstyle{remark}
\newtheorem{remark}[theorem]{\bf Remark}
\newtheorem{notation}[theorem]{\bf Notation}
\numberwithin{equation}{section}
\begin{document}

\title[A remark on the component group of the Sato--Tate group]{A remark on the component group of the Sato--Tate group}

\author[Grzegorz Banaszak]{Grzegorz Banaszak}
\address{Department of Mathematics and Computer Science, Adam Mickiewicz University,
Pozna\'{n} 61-614, Poland} 
\email{banaszak@amu.edu.pl}

\author[Victoria Cantoral Farf\'an]{Victoria Cantoral Farf\'an}
\address{Mathematisches Institut, Georg-August Universit\"at G\"ottingen, Bunsenstrasse 3-5, 37073 G\"ottingen, Germany}
\email{victoria.cantoralfarfan@mathematik.uni-goettingen.de}


\date{\today}
\keywords{Mumford--Tate group, Algebraic Sato--Tate group}
\subjclass[2010]{Primary 14C30; Secondary 11G35}

\keywords{Algebraic Sato--Tate group, Twisted Lefschetz groups, Algebraic Sato--Tate conjecture}

\thanks{Cantoral Farf\'an was partially supported by KU Leuven IF C14/17/083. The authors acknowledge the support provided by the Department of Mathematics and Computer Science of Adam Mickiewicz University in Poznań during the research visit of Cantoral Farf\'an.} 

\begin{abstract}
In this paper we give a complete characterization of the component group of the Sato--Tate group of an abelian variety $A$ of arbitrary dimension, defined over a number field $K,$ in terms of the connectedness of the Lefschetz group associated to $A.$  
\end{abstract}

\maketitle


\section{Introduction}\label{sec: section 1}

	In the past six decades, the Sato--Tate conjecture has been largely studied since it has a strong connection with the generalized Riemann hypothesis. The Sato--Tate conjecture for an elliptic curve without complex multiplication defined over a number field, predicts the equidistribution of traces of Frobenius automorphisms with respect to the Haar measure of the corresponding Sato--Tate group. On the other hand the generalized Riemann hypothesis predicts where are located the zeros of the $L$-function, associated with the elliptic curve.
	
Despite all the efforts, only a few cases are known so far~\cite{Taylor 2008},~\cite{allen2018potential}. It is known, that when considering elliptic curves over number fields, the Sato--Tate group can only be one of the following three groups $\SU(2),\, \Uni(1)$ and $\Nor(\Uni(1))$. Nonetheless, when considering higher-dimensional abelian varieties, the classification of the possible Sato--Tate groups become more difficult to study~\cite{FKRS12} and~\cite{FKS2019}.  It is of utmost importance to determine Sato--Tate groups. The first approach is to study their component groups rather than the groups themselves in some particular settings. The goal of this paper is to give an explicit determination of the component group of the Sato--Tate group.

\subsection{The main result}

	The algebraic Sato--Tate conjecture, introduced by Serre and developed later on by Banaszak and Kedlaya~\cites{BK1, BK2}, is a key input in the direction of determining the Sato--Tate group. Notice that once we have a better understanding of the Sato--Tate group we could potentially determine new instances of the generalized Sato--Tate conjecture for higher-dimensional abelian varieties. This latter conjecture predicts the equidistribution of the normalized factors of the $L$-function associated to an abelian variety. One of the main goals of this paper is to give a characterization of the component group of the Sato--Tate group when enough information is provided. 
	The initial motivation of this paper was a previous result of Banaszak and Kedlaya~\cite[Thm. 6.1]{BK1}. In this theorem, the authors gave very specific conditions for the algebraic Sato--Tate conjecture to hold. More precisely, the algebraic Sato--Tate conjecture is known to be true when: 1) the abelian variety is fully of Lefschetz type (i.e. the Mumford--Tate conjecture is satisfied and the Hodge group coincides with the Lefschetz group), and 2) the twisted Lefschetz group is connected. Notice that the second condition does not always hold once $g>3$. Hence, we wonder what can be said when the second condition is not anymore satisfied. The spirit of our main result lies in the fact that in the case of simple abelian varieties of type III, in the sense of Albert's classification, the twisted Lefschetz group is not connected. Let us recall that this type in Albert's classification only appears when the dimension of the abelian variety is at least $4$. Our main result can be stated as follows:

\begin{theorem}\label{th: theorem intro}
Let $A$ be an abelian variety defined over a number field $K$. Consider its twisted Lefschets group $\gpDL_K(A)$ and the smallest finite field extension $K_e /K$ where all endomorphisms of $A$ are defined over. Under some hypotheses\footnote{We refer the reader to Section~\ref{sec: section 3}, Theorem~\ref{th: 3.4}} we obtain the following direct product decomposition:
\begin{equation}
	\pi_{0} (\gpDL_{K}(A))= \pi_{0} (\gpDL_{K}^{\id} (A))  \times \Gal(K_{e}/K).
\end{equation}
\end{theorem}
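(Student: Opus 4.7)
The plan is to realize the desired decomposition as arising from a natural short exact sequence
\begin{equation*}
1 \to \gpDL_K^{\id}(A) \to \gpDL_K(A) \to \Gal(K_e/K) \to 1,
\end{equation*}
and then to show that after applying $\pi_0$ the resulting sequence of finite groups splits as a direct product.

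First, I would construct the surjection $\gpDL_K(A) \twoheadrightarrow \Gal(K_e/K)$. By definition, $\gpDL_K(A)$ decomposes into cosets indexed by the action of Galois on $\End(A_{\overline{K}}) \otimes \Ql$, and this action factors through $\Gal(K_e/K)$ since $K_e$ is by construction the smallest extension over which all endomorphisms of $A$ are defined. The kernel of this projection is exactly $\gpDL_K^{\id}(A)$. Surjectivity is expected to follow from the standard fact, built into the algebraic Sato--Tate formalism, that every $\tau \in \Gal(K_e/K)$ is realized on the endomorphism algebra by some element of $\gpDL_K(A)$.

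Next, I would apply the $\pi_0$ functor. Since $\Gal(K_e/K)$ is finite and discrete, the fibers of the projection are clopen in $\gpDL_K(A)$; in particular $\gpDL_K^{\id}(A)$ is a union of connected components of $\gpDL_K(A)$, so distinct components of $\gpDL_K^{\id}(A)$ remain distinct in $\gpDL_K(A)$. This produces the short exact sequence of finite groups
\begin{equation*}
1 \to \pi_0\bigl(\gpDL_K^{\id}(A)\bigr) \to \pi_0\bigl(\gpDL_K(A)\bigr) \to \Gal(K_e/K) \to 1.
\end{equation*}

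The crux is then to promote this extension to a direct product. I would try to exhibit, for each $\tau \in \Gal(K_e/K)$, a distinguished preimage $g_\tau \in \gpDL_K(A)$ such that (i) $\tau \mapsto [g_\tau]$ descends to a group homomorphism $\Gal(K_e/K) \to \pi_0(\gpDL_K(A))$ giving a splitting, and (ii) each class $[g_\tau]$ commutes with every class in $\pi_0(\gpDL_K^{\id}(A))$. The hypotheses referenced in the footnote (the Lefschetz-type structural assumptions in Theorem~\ref{th: 3.4}) should furnish appropriate representatives, probably coming from elements of the Lefschetz group twisted by natural algebraic data (polarizations, idempotents, or Rosati involutions) attached to the isotypic decomposition of $A$ over $K_e$. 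The main obstacle, and where I expect the Lefschetz-type hypotheses to be genuinely used, is establishing centrality: one must verify that the outer action of $\Gal(K_e/K)$ on $\pi_0(\gpDL_K^{\id}(A))$ is trivial, i.e.\ that conjugation by any $g_\tau$ preserves every connected component of $\gpDL_K^{\id}(A)$. Splitting alone would only give a semidirect product, so collapsing it to a direct product is the delicate point of the argument.
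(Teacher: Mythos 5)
You have the right short exact sequence and you correctly identify the group-theoretic obstruction (a section of $\pi$ by itself only gives a semidirect product), but the core of the argument is missing, and your guess about what the unstated hypotheses supply does not match the theorem. The hypotheses of Theorem~\ref{th: 3.4} are not ``Lefschetz-type structural assumptions'' about polarizations, idempotents or Rosati involutions; they are (i) the map $d \colon \pi_{0}(\GlKo) \to \pi_{0}(\gpDL_{K}(A))$ induced by the inclusion $\GlKo \subseteq \gpDL_{K}(A)_{\Ql}$ is an epimorphism, and (ii) the inclusion $j_{\ell}\colon \pi_{0}((\GlKo)^{\id}) \to \pi_{0}(\GlKo)$ admits a retraction $\sigma_{\ell}$ (what the paper calls a splitting of $j_{\ell}$, deliberately contrasted with a splitting $s_{\ell}$ of $\pi_{\ell}$). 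Nothing in these hypotheses hands you distinguished representatives $g_{\tau}$, let alone the commutation of their classes with all of $\pi_{0}(\gpDL_{K}^{\id}(A))$, and you give no argument for either; the centrality claim is exactly the hard point and is left entirely to hope.

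The paper's proof avoids your ``delicate point'' altogether by working on the other end of the sequence: a retraction $\sigma$ of $j$ (rather than a section of $\pi$) automatically yields a direct product decomposition via $g \mapsto (\sigma(g),\pi(g))$, with no triviality-of-action argument needed even for non-abelian groups --- this is precisely the content of the paper's appendix and the reason the authors insist on $\sigma_{\ell}$ rather than $s_{\ell}$. The actual work is then a diagram chase in Diagram~\ref{diagram compatibility of GlK1alg with DLKA with the sections}: given $\beta \in \pi_{0}(\gpDL_{K}(A))$, choose $\alpha$ with $d(\alpha)=\beta$ (possible by (i)) and set $\sigma(\beta) := d^{\id}(\sigma_{\ell}(\alpha))$; one checks independence of the choice of $\alpha$, that $\sigma$ is a homomorphism, and that $\sigma \circ j = \id$. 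To repair your proposal you would either need to adopt this transport-of-retraction argument, or genuinely produce your section and prove that the outer action of $\Gal(K_{e}/K)$ on $\pi_{0}(\gpDL_{K}^{\id}(A))$ is trivial, which the stated hypotheses do not obviously give by your route.
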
	

\noindent 
Notice that, when assuming that the abelian variety is fully of Lefschetz type\footnote{See Definition~\ref{def: FLT}.}, we have the following isomorphisms: 
\begin{equation}
	\pi_0(\GlKo_{\Ql}) \simeq \pi_0(\AST_K(A)) \simeq \pi_0(\ST_K(A)) \simeq \pi_0(\gpDL_{K}(A)). 
\end{equation}
Hence, we are able to concretely determine, as a corollary, the component group of the Sato--Tate group in terms of the connectedness of the Lefschetz group.

\begin{corollary}
Under the hypotheses of Theorem~\ref{th: theorem intro} and the assumption that $A$ is fully of Lefschetz type, we have the following isomorphism:
\begin{equation}
	\pi_0(\ST_K(A))  \simeq \pi_0(\gpDL_{K}(A)) \simeq \pi_0 \gpDL_K^{id}(A) \times Gal(K_e/K).
	\label{isomorphisms-introduction}
\end{equation}	
In particular if $\gpDL_K^{id}(A)$ is connected then: 
\begin{equation}
	\pi_0(\ST_K(A)) \simeq \pi_0(\gpDL_{K}(A)) \simeq \Gal(K_e/K).
\end{equation}
\end{corollary}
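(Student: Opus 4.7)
The plan is to obtain the corollary as an almost immediate concatenation of two inputs: the chain of $\pi_0$-identifications that is available for varieties fully of Lefschetz type, and the direct product decomposition supplied by Theorem~\ref{th: theorem intro}. Concretely, the fully-Lefschetz-type hypothesis is exactly what turns the Mumford--Tate conjecture and the algebraic Sato--Tate conjecture into theorems for $A$ (via the Banaszak--Kedlaya framework cited in the introduction); this is what gives the advertised string
\begin{equation*}
\pi_0(\GlKo_{\Ql}) \simeq \pi_0(\AST_K(A)) \simeq \pi_0(\ST_K(A)) \simeq \pi_0(\gpDL_{K}(A)).
\end{equation*}
Once this chain is in hand, Theorem~\ref{th: theorem intro} rewrites the rightmost term as $\pi_0(\gpDL_K^{\id}(A)) \times \Gal(K_e/K)$, and composing the two displays yields the corollary.

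In writing this out I would first check that the hypotheses of Theorem~\ref{th: theorem intro} are consistent with the fully-of-Lefschetz-type assumption, so that both may legitimately be invoked on the same $A$; Section~\ref{sec: section 3} is where one confirms this (in particular, "fully of Lefschetz type" supplies the Hodge$=$Lefschetz identification that drives the $\AST \simeq \gpDL_K$ step). Next, I would briefly recall the provenance of each arrow in the chain: the first isomorphism uses the Mumford--Tate conjecture to identify the neutral component of $\GlK$ with (a twist of) the Mumford--Tate group extended by Galois; the second is the content of the algebraic Sato--Tate conjecture, known in this setting; and the third follows from the general principle that a maximal compact subgroup of a reductive complex algebraic group has the same component group. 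Each arrow is induced by an inclusion of algebraic groups and is Galois-equivariant in the obvious sense, so the product structure produced by Theorem~\ref{th: theorem intro} transports intact through all three isomorphisms.

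The "in particular" statement is then the trivial specialization in which the first factor $\pi_0(\gpDL_K^{\id}(A))$ degenerates, leaving $\pi_0(\ST_K(A)) \simeq \Gal(K_e/K)$. The main obstacle is not in the present corollary at all, which is essentially a one-line deduction, but in its two ingredients: Theorem~\ref{th: theorem intro}, whose proof is the substantive work of Section~\ref{sec: section 3}, and the fully-Lefschetz-type machinery of~\cite{BK1}, which is borrowed wholesale. The only point warranting care here is to keep track that the decomposition is an equality (or canonical isomorphism) rather than merely an abstract isomorphism, so that it respects the Galois-equivariant identifications above.
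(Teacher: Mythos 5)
Your proposal matches the paper's own deduction: the fully-of-Lefschetz-type hypothesis (via the Mumford--Tate conjecture, the resulting validity of the algebraic Sato--Tate conjecture, and the maximal-compact-subgroup principle) yields the chain $\pi_0(\GlKo)\simeq\pi_0(\AST_K(A))\simeq\pi_0(\ST_K(A))\simeq\pi_0(\gpDL_K(A))$, and Theorem~\ref{th: theorem intro} then rewrites the last term as the direct product, with the ``in particular'' case being the trivial specialization. This is essentially the same one-line argument the paper gives.
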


\noindent
In other words, we provide a way to determine the component group of the Sato--Tate group in terms of endomorphisms of $A$ when enough information is known. We refer the reader to Section~\ref{sec: section 2} for the notation introduced above. In Section~\ref{sec: section 3} we present a proof of our main result.


\section{Preliminaries}\label{sec: section 2}

	Throughout this paper $A$ will denote an abelian variety defined over a number field $K$ with the rational endomorphism algebra $D := \End_{{\overline K}} (A)\otimes \Q$. Let $V$ denote the first homology group $V(A) := H_1 (A(\C), \, \Q)$ and let $\Vl$ denote the $\Ql$-vector space $\Vl(A) := V(A) \otimes_{\Q} \Ql$ for every prime number $\ell$. Let $G_K:=\Gal(\overline{K}/K)$ be the absolute Galois group and 
\begin{equation}\label{eq: l-adic representation}
	\rho_\ell \, :\, G_K \rightarrow \GL(\Vl)
\end{equation}
the $\ell$-adic representation attached to $A$. Consider as well the $\Q$-bilinear non degenerate alternating form $\psi: V\times V \to \Q$ coming from the polarization of $A$. Equivalently, for every prime number $\ell$, we have $\psi_{\ell}: \Vl\times \Vl \to \Ql$.

Recall the definition of general symplectic groups associated with forms $\psi$ and $\psi_{\ell}:$
\begin{equation}
	\begin{aligned}
		\GSp_{(V,\psi)}&:=\{g\in \GL(V);\; \psi(gv,gw)=\chi(g)\psi(v,w),\; \forall v,w \in V\},\\
		\GSp_{(\Vl,\psi_{\ell})}&:=\{g\in \GL(\Vl);\; \psi_{\ell}(gv,gw)=\chi_{\ell}(g)\psi(v,w),\; \forall v,w \in \Vl \},
	\end{aligned}
\end{equation}
where $\chi$ and $\chi_{\ell}$ are the following associated characters: 
\begin{equation}
	\begin{aligned}
		\chi:\GSp_{(V,\psi)}&\to \G_{m,\Q}\\
		\chi_{\ell}:\GSp_{(\Vl,\psi_{\ell})}&\to \G_{m,\Ql}.
	\end{aligned}
\end{equation}
The special symplectic groups are given by
\begin{equation}
	\begin{aligned}
		\Sp_{(V,\psi)}&:=\{g\in \GL(V);\; \psi(gv,gw)=\psi(v,w),\; \forall v,w \in V\},\\
		\Sp_{(\Vl,\psi_{\ell})}&:=\{g\in \GL(\Vl);\; \psi_{\ell}(gv,gw)=\psi(v,w),\; \forall v,w \in \Vl\}.
	\end{aligned}
\end{equation}


\subsection{The endomorphism field extension}

	When studying abelian varieties, defined over a number field $K$, we will examine a particular finite field extension of $K$ which is related to the endomorphisms of the abelian variety. Consider the following continuous representation~\cites{BK1,BK2}
\begin{equation}\label{eq: representation rho_e}
	\rho_e: G_K  \to \Aut_{\Q}(D)
\end{equation}
and denote by $K_e$ the fixed field of the kernel of $\rho_e$, that is $K_e:=\overline{K}^{\Ker \rho_e}$.
\begin{notation}
The finite field extension $K_e$ will be called the \emph{endomorphism field extension}.
\end{notation}
	

\subsection{The Lefschetz group and the twisted Lefschetz group}

\begin{definition}
The \emph{Lefschetz group} of an abelian variety $A$ is defined as follows: 
\begin{equation}\label{The Lefschetz group}
	\mathcal{L}(A):=C_D(\Sp_{(V,\psi)})^{\circ}. 
\end{equation}
\end{definition}
For each $\tau \in \Gal(K_{e}/K)$ we have a closed subscheme of $\Sp_{(V,\, \psi)}:$ 
\begin{equation}
	\gpDL_{K}^{\tau}(A) := \{g \in \Sp_{(V,\, \psi)}: \, g \beta g^{-1} = \rho_e(\tau) (\beta) \,\,\, \forall \,\beta \in D\}.
\end{equation}
Taking disjoint union, we obtain the {\it{twisted Lefschetz group}}~\cite[Def. 5.2]{BK1}:
\begin{equation}
	\gpDL_{K}(A) \,\,\, : = \bigsqcup_{\tau \in \Gal(K_{e}/K)} \,\, \gpDL_{K}^{\tau}(A).
\end{equation}

\begin{remark}\label{rmk: relations TLG}
We have the following equalities (see~\cite[\S 5]{BK1},~\cite[\S 3]{BK2}):
\begin{enumerate}
\item $\gpDL^{id}_K(A)=\{g\in \Sp_{(V,\psi)}, \, g\beta g^{-1}=\beta, \, \forall \beta\in D\}=C_D(\Sp_{(V,\psi)}),$
\item $\gpDL^{id}_K(A)^{\circ}=C_D(\Sp_{(V,\psi)})^{\circ}=\mathcal{L}(A)$,
\item $\gpDL_K(A)^{\circ}=\gpDL^{id}_K(A)^{\circ}$,
\item $\gpDL^{id}_K(A)=\gpDL^{id}_{K_e}(A)=\gpDL_{K_e}(A)$.
\end{enumerate}
\end{remark}


\subsection{The $\ell$-adic monodromy groups and their twists}

	For the representation~\eqref{eq: l-adic representation} define the $\ell$-adic monodromy group $\GlK(\Vl,\psi_{\ell})$ as the Zariski closure of the image of the $\ell$-adic representation in $\GSp_{(\Vl, \psi_{\ell})}$.
When no ambiguities exist regarding the $\Ql$-vector space, we will denote the $\ell$-adic monodromy group by $\GlK$ rather than $\GlK(\Vl,\psi_{\ell})$. 
Let us define the following algebraic group defined over $\Ql$:
\begin{equation}
	\GlKo=\GlKo(\Vl,\psi_{\ell}):=\GlK \cap \Sp_{(\Vl,\psi_{\ell})}.
\end{equation}

For all $\tau\in \Gal(K_e/K)$ consider the following twists of the monodromy group.
\begin{equation}
	\begin{aligned}
		(\GlK)^{\tau} &:= \{g \in \GlK: \, g \beta g^{-1} = \rho_e(\tau) (\beta) \,\,\,\, \forall \,\beta \in D\},\\
		(\GlKo)^{\tau} &:= (\GlK)^{\tau} \cap \GlKo.
	\end{aligned}
	\label{eq:eq 2.8}
\end{equation}
Notice that we have
\begin{equation}
	(\GlKo)^{\tau}  \subseteq \gpDL_{K}^{\tau}(A)_{\Ql}.
	\label{GlK1algtau subset DLKtauVpsiDQl}
\end{equation}
	
Let $\tilde\tau \in G_K$ be a lift of $\tau \in \Gal(K_{e}/K)$ and remark that the Zariski closure of $\rho_\ell (\tilde\tau \, G_{K_{e}}) = \rho_\ell (\tilde\tau) \, \rho_\ell (G_{K_{e}})$ in $\GSp_{(\Vl, \psi_\ell)}$ is  $\rho_\ell (\tilde\tau) \, \GlKe$, where $G_{K_e}$ is the absolute Galois group of the endomorphism field $K_e$.
Moreover we have the following properties~\cite[Rmk. 5.13]{BK2}:
\begin{equation}
	\GlK= \bigsqcup_{\tau \in \Gal(K_{e}/K)} \,\, (\GlK)^{\tau},
	\label{decomposition of GlKalg into cosets of twisted algebraic closures}
\end{equation}  
\begin{equation}
	\GlKo= \bigsqcup_{\tau \in \Gal(K_{e}/K)} \,\, (\GlKo)^{\tau}.
	\label{decomposition of Glk1 into twists}
\end{equation} 
By the equations~\eqref{GlK1algtau subset DLKtauVpsiDQl} and~\eqref{decomposition of Glk1 into twists} we get:
\begin{equation}
	(\GlKo)  \subseteq \gpDL_{K}(A)_{\Ql}.
	\label{GlK1alg subset DLKVpsiAl}
\end{equation}
In addition, $\rho_\ell (\tilde\tau) \, \GlKe = (\GlK)^{\tau}$ for all $\tau.$ Hence, we obtain the following equality 
\begin{equation}
	(\GlKo)^{\id} = \GlKeo
	\label{GlKo id = GlKeo}
\end{equation}
and the following natural isomorphism:
\begin{equation}
	\GlK/ (\GlK)^{\id} \cong \Gal(K_e/K).
	\label{GlK1alg mod GlK1algid = GLeK}
\end{equation}
Since $\gpDL_{K}^{\id}(A) = \gpDL_{K_{e}}(A)$ (from Remark~\ref{rmk: relations TLG} (4)), we obtain  the inclusion
\begin{equation}
	\GlKeo \subseteq \gpDL_{K_{e}}(A)_{\Ql},
	\label{GlLe1alg subset DLLeA}
\end{equation} 
and the following natural isomorphisms:
\begin{equation}\label{eq: iso Gl, DL, Gal}
	\GlKo/ (\GlKo)^{\id} \, \cong \, \gpDL_{K}(A)/ \gpDL_{K}^{\id}(A) \, \cong \, \Gal(K_e/K).
\end{equation}
\medskip


\subsection{The maps $d$ and $d^{\id}$}

	From the isomorphisms~\eqref{GlK1alg mod GlK1algid = GLeK} and~\eqref{eq: iso Gl, DL, Gal} we obtain the following exact sequences:
\begin{equation}
	1 \rightarrow \pi_{0} ((\GlK)^{\id}) \rightarrow \pi_{0} (\GlK) \rightarrow \Gal(K_{e}/K) \rightarrow 1
	\label{exact sequence 1}\end{equation}
\begin{equation}
	1 \rightarrow \pi_{0} ((\GlKo)^{\id}) \rightarrow \pi_{0} (\GlKo) \rightarrow \Gal(K_{e}/K) \rightarrow 1
	\label{exact sequence 2}
\end{equation} 
A natural question that arises is to know under which conditions exact sequences~\eqref{exact sequence 1} and~\eqref{exact sequence 2} split as direct products or as semi-direct products. Furthermore, it will be interesting to explore the consequences of having such exact sequences. 

Consider the natural homomorphisms:
\begin{equation}
	d : \,\pi_{0} (\GlKo)  \,\, \rightarrow  \,\, \pi_{0} (\gpDL_{K}(A)),
	\label{pi0 GlK1alg maps to pi0 DLKA}
\end{equation}
\begin{equation}
	d^{\id} :\, \pi_{0} ((\GlKo)^{\id}) \,\, \rightarrow \,\,  \pi_{0} (\gpDL_{K}^{\id}(A)).
	\label{pi0 GlK1alg maps to pi0 DLKidA} 
\end{equation}
If the homomorphism~\eqref{pi0 GlK1alg maps to pi0 DLKA} is an epimorphism then by the inclusion~\eqref{GlK1algtau subset DLKtauVpsiDQl} the homomorphism~\eqref{pi0 GlK1alg maps to pi0 DLKidA} is an epimorphism. Serre proved that there is natural isomorphism (cf.~\cite[Thm. 3.3]{BK1},~\cite[Thm. 4.8]{BK2})
\begin{equation}
	\pi_{0} (\GlKo) \cong \pi_{0} (\GlK).
\end{equation}
Hence the homomorphism~\eqref{pi0 GlK1alg maps to pi0 DLKA} gives a natural homomorphism
\begin{equation}
	\pi_{0} (\GlK)  \,\, \rightarrow  \,\, \pi_{0} (\gpDL_{K}(A)).
\end{equation}
Under the inclusion~\eqref{GlK1algtau subset DLKtauVpsiDQl} we have the following commutative diagram:
\begin{figure}[H]
\[
\begin{tikzcd}
1 \arrow{r}{} & \,\, \pi_{0}((\GlKo)^{\id}) \arrow{d}[swap]{d^{\id}} \arrow{r}{j_\ell}  & 
\pi_{0} (\GlKo)  \arrow{d}[swap]{d} \arrow{r}{\pi_\ell} & \Gal(K_{e}/K) 
\arrow{d}{=}[swap]{} \arrow{r}{} & 1 \\ 
1 \arrow{r}{} & \pi_{0} (\gpDL_{K}^{\id} (A))  \arrow{r}{j} & 
\pi_{0} (\gpDL_{K}(A)) \arrow{r}{\pi} & \Gal(K_{e}/K) \arrow{r}{} & 1\\
\end{tikzcd}
\]
\\[-0.3cm]
\caption{}
\label{diagram compatibility of GlK1alg with DLKA} 
\end{figure}


\subsection{Mumford--Tate conjecture and abelian varieties fully of Lefschetz type}

	Consider a complex abelian variety $A$, then we can attach to it a reductive algebraic group defined over $\Q$ - the \textit{Mumford--Tate group} associated to $A$. To define it, we need to introduce the following morphism:

\begin{equation}
	\begin{aligned}
		h: \, \mathrm{Res}_{\C/\R} \, \G_{m,\C} & \to \GL(V)_{\R}\\
			 z &\mapsto h(z),
	\end{aligned}
\end{equation}
where $V=H_1(A,\Q)$ is the first homology group. We can determine the Hodge structure decomposition of $V_\C$, which in turn, is equivalent to the data of the linear map $h(z)$. Indeed, we know that 
\begin{equation}
	V_\C=H_1(A,\C)=V^{-1,0}\oplus V^{0,-1},
\end{equation}
and 
\begin{equation}
	\begin{aligned}
		h(z): \, V^{-1,0}\oplus V^{0,-1} & \to V^{-1,0}\oplus V^{0,-1},\\
		 	v &\mapsto z v \quad {\rm{for}} \,\, v \in V^{-1,0}\\
		 	v &\mapsto \bar{z} v \quad {\rm{for}} \,\, v \in V^{0,-1}.
	\end{aligned}
\end{equation}
The Mumford--Tate group $\MT(A)$ is then defined as the smallest algebraic group, defined over $\Q$, such that $h$ factors through $\MT(A)_{\R}$. Notice that the Mumford--Tate group is connected. There is smaller algebraic group called the \textit{Hodge group} associated to $A$ and defined by
\begin{equation}
	\Hg(A)=(\MT(A)\cap \SL(V))^{\circ}. 
\end{equation}
\noindent
Both groups are related via the following equality:
\begin{equation}
	\MT(A)=\G_m\cdot \Hg(A).
\end{equation}
Moreover, the Hodge group $\Hg(A)$ is contained in $C_D(\Sp_{(V,\psi)}).$ Because $\Hg(A)$ is connected we have the following inclusion:
\begin{equation}
\Hg(A) \subset \mathcal{L}(A).
\end{equation}

\begin{definition}\label{def: FLT}
The abelian variety $A$ is \emph{fully of Lefschetz type} if and only if
	\begin{enumerate}
		\item $\Hg(A) \, = \, \mathcal{L}(A),$
		\item the Mumford--Tate Conjecture hold for $A$.
	\end{enumerate}
\end{definition}

\begin{remark}
By work of Deligne, Piatetski-\v{S}apiro and Borovo\v\i\, we know that 
\begin{equation}
	\GlKo^{\circ}\subseteq \Hg(A)_{\Ql} \subseteq \mathcal{L}(A)_{\Ql} = \gpDL_K(A)^{\circ}_{\Ql}.
\end{equation}
\end{remark}

\noindent
Therefore $A$ is fully of Lefschetz type if and only if the following isomorphisms:
\begin{equation}
	\GlKo^{\circ} \simeq \Hg(A)_{\Ql} \simeq \mathcal{L}(A)_{\Ql},
\label{MT-conjecture via Hg(A)}
\end{equation}
hold for every prime number $\ell.$ The first isomorphisms in \eqref{MT-conjecture via Hg(A)} is the Mumford--Tate conjecture. We refer the readers to the following survey paper~\cite{Moonen} for further details about the Mumford--Tate conjecture.


\section{Proof of the main result and further applications}\label{sec: section 3}


\subsection{Preliminaries}\label{sec: subsection 3.1}

	The goal of this section is to determine $\pi_0(\gpDL_{K}(A))$ in terms of the endomorphism field extension $K_e$. In order to do so, we need to consider two possibilities: either $\gpDL_K^{id}(A)$ is connected or not.

\begin{remark} 
For a fixed embedding $\iota:\Ql \hookrightarrow \C$ recall that we have from the work of Serre and~\cite[Lem. 2.8]{FKRS12}, the following isomorphisms:
\begin{equation}\label{pi}
	\pi_0(\GlK) \simeq \pi_0 (\GlKo) \simeq_{\iota} \pi_0(\GlKo_\C) \simeq \pi_0(\ST_K(A)).
\end{equation}
where $\ST_K(A)$ is a maximal compact subgroup of $\GlKo(\C) = \GlKo_\C (\C).$
\label{remark-isomorphism without assumption AST}
\end{remark}
\medskip

In~\cite[Chap. 8]{Se12} Serre stated the algebraic Sato--Tate conjecture which was later on developed in~\cite[Conj. 2.1]{BK1} and~\cite[Conj. 5.1]{BK2} as follows.

\begin{conjecture}({\rm{Algebraic Sato--Tate conjecture}})   

\noindent
${\rm{(a)}}$ There is a natural-in-$K$ reductive algebraic group $\AST_K(A)$ defined over $\Q$ such 
that $\AST_K(A)\subseteq \Sp_{(V,\psi)}$ and for each prime number $\ell$ there is a natural-in-$K$ monomorphism
of group schemes:
\begin{equation}
\gpast_{\ell, K} \,\, : \,\,   \GlKo  \,\, {\stackrel{}{\hookrightarrow}} \,\, \AST_K(A)_{\Ql}.
\label{Algebraic Sato-Tate monomorphism}
\end{equation} 
\noindent
${\rm{(b)}}$ The map~\eqref{Algebraic Sato-Tate monomorphism} is an isomorphism: 
\begin{equation}
\gpast_{\ell, K} \,\, : \,\,  \GlKo  \,\, {\stackrel{\simeq}{\longrightarrow}} \,\, \AST_K(A)_{\Ql}.
\label{Algebraic Sato-Tate equality generalized}
\end{equation} 
\label{conj: alg. Sato-Tate}
\end{conjecture}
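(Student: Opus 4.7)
The plan is to take $\AST_K(A) := \gpDL_K(A)$ as the candidate algebraic Sato--Tate group. First I would verify that this is a reductive algebraic $\Q$-group contained in $\Sp_{(V,\psi)}$: the containment is immediate from the definition, the identity component $\gpDL_K^{\id}(A)^{\circ} = \mathcal{L}(A)$ is reductive by the classical theory of the Lefschetz group, and the remaining components are cosets indexed by the finite group $\Gal(K_e/K)$, so the entire scheme is reductive. Naturality in $K$ reduces to the functoriality of the constructions defining $D$, $\rho_e$, and the twisted pieces $\gpDL_K^{\tau}(A)$.

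Part (a) is then essentially already present in the preliminaries: the inclusion~\eqref{GlK1alg subset DLKVpsiAl} provides a closed immersion $\gpast_{\ell,K} : \GlKo \hookrightarrow \gpDL_K(A)_{\Ql}$, and naturality in $K$ follows from the twisting identity $(\GlK)^{\tau} = \rho_\ell(\tilde\tau)\GlKe$ together with the decomposition~\eqref{decomposition of Glk1 into twists}.

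For part (b) I would invoke the fully-of-Lefschetz-type hypothesis. The Mumford--Tate conjecture combined with $\Hg(A) = \mathcal{L}(A)$ yields $\GlKo^{\circ} = \mathcal{L}(A)_{\Ql} = \gpDL_K(A)^{\circ}_{\Ql}$, so the identity components match. What remains is surjectivity of $\gpast_{\ell,K}$ on component groups, that is, for every $\tau \in \Gal(K_e/K)$ the twisted coset $(\GlKo)^{\tau}$ must meet every connected component of $\gpDL_K^{\tau}(A)_{\Ql}$. Applying the five lemma to the commutative diagram in Figure~\ref{diagram compatibility of GlK1alg with DLKA}, whose rows are exact with identical rightmost terms $\Gal(K_e/K)$, this reduces to proving that $d^{\id} : \pi_0((\GlKo)^{\id}) \to \pi_0(\gpDL_K^{\id}(A))$ is surjective --- a statement that lives purely over the endomorphism field $K_e$ via the identification $(\GlKo)^{\id} = \GlKeo$.

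The main obstacle is exactly this last step. When $\gpDL_K^{\id}(A)$ is already connected, which is the original Banaszak--Kedlaya hypothesis of~\cite[Thm.~6.1]{BK1}, the required surjectivity is vacuous and part (b) follows. But as soon as simple factors of type III in Albert's classification appear, $\gpDL_K^{\id}(A)$ acquires extra components and one must produce Frobenius-like elements in $\rho_\ell(G_{K_e})$ realizing each component class. This is a delicate image-of-Galois problem typically requiring Chebotarev-style density arguments together with explicit knowledge of the Lefschetz group in each Albert type; it is precisely the analysis of the resulting component structure on both sides that the present paper's main result (Theorem~\ref{th: theorem intro}) is designed to facilitate.
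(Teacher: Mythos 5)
You are attempting to prove a statement that the paper itself presents only as a \emph{conjecture}: the Algebraic Sato--Tate Conjecture~\ref{conj: alg. Sato-Tate} is open in general, and the paper contains no proof of it to compare yours against. Your proposal correctly reconstructs the standard Banaszak--Kedlaya strategy: take $\AST_K(A):=\gpDL_K(A)$, get the monomorphism of part (a) from the inclusion $\GlKo\subseteq\gpDL_K(A)_{\Ql}$ of~\eqref{GlK1alg subset DLKVpsiAl}, and, under the (extra, not-in-the-statement) fully-of-Lefschetz-type hypothesis, reduce part (b) to surjectivity of $d$ on component groups, equivalently of $d^{\id}$ over $K_e$. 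But the step you label ``the main obstacle'' is not a technical loose end to be filled in by a Chebotarev argument --- it \emph{is} the open content of the conjecture. Surjectivity of $d$ is exactly Serre's Conjecture~\ref{Serre Conjecture on image of Galois meeting all components}, and no unconditional method is known for producing Galois elements meeting every component of $\gpDL_{K_e}(A)_{\Ql}$ once $\gpDL_K^{\id}(A)$ is disconnected (the type III case you single out). So your write-up is an accurate map of where the difficulty lives, not a proof.

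Two further remarks. First, the only unconditional derivation in the literature, \cite{CC2019} (cited in Remark~\ref{remark-isomorphism with assumption AST}), obtains the conjecture from the Mumford--Tate conjecture by a different construction of $\AST_K(A)$ through the motivic/Mumford--Tate formalism, rather than by verifying component-surjectivity for the twisted Lefschetz group directly; if you want to assume Mumford--Tate anyway (as fully-of-Lefschetz-type does), that is the route to cite rather than to re-prove. Second, a small point in your favor: given $\GlKo^{\circ}=\gpDL_K(A)^{\circ}_{\Ql}$, injectivity of $d$ is automatic for a closed subgroup scheme with the same identity component, so surjectivity on $\pi_0$ really is the only missing ingredient --- but it is missing, and it is precisely what Theorems~\ref{th: 3.4} and~\ref{Conditions for ASTK splitting into ASTKid times GKeK and STK splitting into STKid times GKeK} of the paper take as a hypothesis rather than prove.
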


\begin{definition}\label{definition of Sato-Tate group}
The Sato--Tate group $\ST_K(A)$ is a maximal compact subgroup of $\AST_K(A) (\C).$
\end{definition}
		
\begin{remark} Assume Mumford--Tate conjecture for the abelian variety $A.$ By~\cite{CC2019} Algebraic Sato--Tate conjecture holds for $A.$  Hence under the assumption that $d:\pi_0(\GlKo) \to \pi_0(\gpDL_{K}(A))$ is an isomorphism and~\cite[Prop. 3.5]{BK1}, we have the following isomorphisms :
\begin{equation}\label{eq:isomorphisms}
	\pi_0 (\AST_K(A)) \simeq \pi_0(\ST_K(A)) \simeq  \pi_0 (\GlKo) \simeq  \pi_0(\gpDL_{K}(A)).
\end{equation}
\label{remark-isomorphism with assumption AST}
\end{remark}

\begin{remark}
Assume that $A$ is fully of Lefschetz type and $C_D(\Sp_{(V,\psi)})$ is connected i.e.
$\mathcal{L}(A) := C_D(\Sp_{(V,\psi)}).$ Then by~\cite{CC2019} and~\cite[Cor. 9.9]{BK1} the equality~\eqref{eq:isomorphisms} also holds.
\label{remark-isomorphism with assumption fully of Lefschetz type}
\end{remark}
\medskip

We will determine the component group of the twisted Lefschetz group $\gpDL_K(A)$ in terms of the component group of $\gpDL_{K}^{\id} (A)$ and the Galois group $\Gal(K_e/K)$. Firstly, we need to establish a relation between the component groups $\pi_0(\gpDL_{K}(A))$ and $\pi_0(\gpDL_{K}^{\id}(A))$ knowing that $\gpDL_{K}(A)^{\circ}=\gpDL_{K}^{\id}(A)^{\circ}$.
Secondly, from~\cite[Thm. 4.6]{BK2} and equality~\eqref{GlKo id = GlKeo} we have the following isomorphisms:
\begin{equation}\label{eq:iso Galois gps}
	\GlKo/\GlKo^{\id} \simeq \GlKo/\GlKeo \simeq \GlK/\GlKe \simeq \Gal(K_e/K),
\end{equation}
where $\GlKo^{\id}=\{g\in \GlKo\; \, g\beta g^{-1}=\beta \quad \forall \beta \in D\}$.


\subsection{The main result}\label{sec: subsection 3.2}

	The goal of this section is to prove Theorem~\ref{th: theorem intro}. Let us consider an abelian variety defined over a number field $K$. Firstly, observe that if $s_\ell$ is a splitting homomorphism of $\pi_\ell$ in Diagram~\ref{diagram compatibility of GlK1alg with DLKA with the sections}  then $d \circ s_\ell$ is a splitting homomorphism of $\pi$.
If the homomorphism $d$ in Diagrams~\ref{diagram compatibility of GlK1alg with DLKA with the sections} is an isomorphism and $\sigma_\ell$ is a splitting homomorphism of $j_\ell$ then, $\sigma := d^{\id} \circ \sigma_\ell \circ d^{-1}$ is a splitting homomorphism of $j$. Until the end of Section~\ref{sec: section 3} we will work with splitting 
$\sigma_\ell$ rather than $s_\ell$ because in principal our work concerns non-abelian groups (cf. Appendix~\ref{sec: section 4}).
  
\begin{figure}[H]
\[
\begin{tikzcd}
1 \arrow{r}{} & \,\, \pi_{0}((\GlKo)^{\id}) \arrow{d}[swap]{d^{\id}} \arrow{r}{j_\ell}  & 
\pi_{0} (\GlKo)  \arrow{d}[swap]{d} \arrow{r}{\pi_\ell} \arrow[l,bend right=33,"\sigma_\ell"{name=U, above}]& \Gal(K_{e}/K) 
\arrow{d}{=}[swap]{} \arrow{r}{} \arrow[l,bend right=33,"s_\ell"{name=U, above}]& 1 \\ 
1 \arrow{r}{} & \pi_{0} (\gpDL_{K}^{\id} (A))  \arrow{r}{j} & 
\pi_{0} (\gpDL_{K}(A)) \arrow{r}{\pi} & \Gal(K_{e}/K) \arrow{r}{} & 1\\
\end{tikzcd}
\]
\\[-0.4cm]
\caption{}
\label{diagram compatibility of GlK1alg with DLKA with the sections} 
\end{figure}

\begin{theorem}\label{th: 3.4}
Consider an abelian variety $A$ defined over a number field $K$. Assume that in Diagram~\ref{diagram compatibility of GlK1alg with DLKA with the sections} homomorphism $d$ is an epimorphism and $\sigma_{\ell}$ splits $j_{\ell}.$ Then, there exists $\sigma$ that splits $j$ and there is the following direct product decomposition:
\begin{equation}
	\pi_{0} (\gpDL_{K}(A))= \pi_{0} (\gpDL_{K}^{\id} (A))  \times \Gal(K_{e}/K).
	\label{DLK splits into DLKid times GKeK}
\end{equation}
\label{Conditions for DLK splitting into DLKid times GKeK}
\end{theorem}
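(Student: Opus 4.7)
My plan is to transport the direct product decomposition of the top row of Diagram~\ref{diagram compatibility of GlK1alg with DLKA with the sections with the sections} to the bottom row using $d$, exploiting the fact that $d$ is an epimorphism whose kernel is controlled.

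The first step is a snake lemma computation applied to the two horizontal short exact sequences with vertical maps $d^{\id}$, $d$ and $\id_{\Gal(K_e/K)}$: from the hypothesis that $d$ is an epimorphism and the right vertical map is the identity, I obtain $\mathrm{coker}(d^{\id})=1$ and the isomorphism $\Ker(d^{\id}) \xrightarrow{\sim} \Ker(d)$. In particular $d^{\id}$ is also an epimorphism, and crucially
\begin{equation*}
\Ker(d) \subseteq j_\ell(\pi_0((\GlKo)^{\id})).
\end{equation*}
The second step uses the retraction $\sigma_\ell$ of $j_\ell$: setting $K_\ell := \Ker(\sigma_\ell)$, the hypothesis that $\sigma_\ell$ splits $j_\ell$ yields the internal direct product decomposition $\pi_0(\GlKo) = j_\ell(\pi_0((\GlKo)^{\id})) \times K_\ell$, and $\pi_\ell$ restricts to an isomorphism $K_\ell \xrightarrow{\sim} \Gal(K_e/K)$.

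The third step is the key technical point. I claim that $d$ restricted to $K_\ell$ is injective: if $x \in K_\ell$ satisfies $d(x)=1$, then $x \in \Ker(d) \subseteq j_\ell(\pi_0((\GlKo)^{\id}))$, but $K_\ell \cap j_\ell(\pi_0((\GlKo)^{\id}))=\{1\}$ by the direct product decomposition, so $x=1$. Setting $L := d(K_\ell) \subseteq \pi_0(\gpDL_K(A))$, the commutativity $\pi \circ d = \pi_\ell$ then forces $\pi|_L \colon L \xrightarrow{\sim} \Gal(K_e/K)$. Since $d$ is surjective and $d(j_\ell(\pi_0((\GlKo)^{\id}))) = j(d^{\id}(\pi_0((\GlKo)^{\id}))) = j(\pi_0(\gpDL_K^{\id}(A)))$ (using that $d^{\id}$ is epi), I get
\begin{equation*}
\pi_0(\gpDL_K(A)) = j(\pi_0(\gpDL_K^{\id}(A))) \cdot L.
\end{equation*}
The intersection $j(\pi_0(\gpDL_K^{\id}(A))) \cap L$ is trivial because elements of $L$ that lie in $\Ker(\pi) = j(\pi_0(\gpDL_K^{\id}(A)))$ must be $1$ (since $\pi|_L$ is injective). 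Commutativity of the two factors follows from functoriality of $d$: inside $\pi_0(\GlKo)$ the subgroups $j_\ell(\pi_0((\GlKo)^{\id}))$ and $K_\ell$ commute (direct product), and $d$ preserves commutators. This produces the desired internal direct product decomposition~\eqref{DLK splits into DLKid times GKeK}.

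For the splitting $\sigma$ of $j$, I will define it by $\sigma(x) := d^{\id}(\sigma_\ell(\tilde x))$ where $\tilde x \in \pi_0(\GlKo)$ is any lift of $x$ under $d$; well-definedness follows from $\Ker(d) \subseteq j_\ell(\pi_0((\GlKo)^{\id}))$ together with the identity $d^{\id} \circ \sigma_\ell \circ j_\ell = d^{\id}$ on the kernel. The main obstacle I anticipate is precisely the well-definedness and the direct (rather than merely semidirect) nature of the decomposition; both hinge on the inclusion $\Ker(d) \subseteq j_\ell(\pi_0((\GlKo)^{\id}))$ obtained in step one, which is the technical heart of the argument.
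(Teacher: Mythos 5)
Your proposal is correct and follows essentially the same route as the paper: the technical heart you identify, namely $\Ker(d)\subseteq j_\ell\bigl(\pi_{0}((\GlKo)^{\id})\bigr)$ (read off from $\pi\circ d=\pi_\ell$ and exactness), is exactly what the paper uses to show that $\sigma(\beta):=d^{\id}\circ\sigma_\ell(\alpha)$ is independent of the lift $\alpha$, and your final definition of $\sigma$ coincides with the paper's. The only cosmetic difference is that you derive the direct product as an internal decomposition $j(\pi_{0}(\gpDL_K^{\id}(A)))\cdot L$ pushed forward from $\Ker(\pi_\ell)\times\Ker(\sigma_\ell)$, whereas the paper first verifies $\sigma\circ j=\id$ and then invokes the general fact (its Appendix) that a retraction of $j$ yields the isomorphism $(\sigma,\pi)$.
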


\begin{proof}
	Assume that $d$ is an epimorphism and that $\sigma_{\ell}$ splits $j_{\ell}.$  Hence, for every $\beta \in \pi_{0} (\gpDL_{K}(A))$ there is $\alpha \in \pi_{0} (\GlKo)$ such that $d (\alpha) = \beta.$ Define the following map: 

\begin{equation}
	\begin{aligned}
		\sigma :\, \pi_{0} (\gpDL_{K} (A)) &\to \pi_{0} (\gpDL_{K}^{\id} (A))\\
			\beta & \mapsto \sigma (\beta) := d^{\id} \circ \sigma_\ell (\alpha).
	\end{aligned}
\end{equation}

\begin{figure}[H]
\[
\begin{tikzcd}
1 \arrow{r}{} & \,\, \pi_{0}((\GlKo)^{\id}) \arrow{d}[swap]{d^{\id}} \arrow{r}{j_\ell}  & 
\pi_{0} (\GlKo)  \arrow{d}[swap]{d} \arrow{r}{\pi_\ell} 
\arrow[l,bend right=33,"\sigma_\ell"{name=U, above}]& \Gal(K_{e}/K) 
\arrow{d}{=}[swap]{} \arrow{r}{} & 1 \\ 
1 \arrow{r}{} & \pi_{0} (\gpDL_{K}^{\id} (A))  \arrow{r}{j} & 
\pi_{0} (\gpDL_{K}(A)) \arrow{r}{\pi} \arrow[l,bend left=33,"\sigma"{name=U, below}] & 
\Gal(K_{e}/K) \arrow{r}{} & 1
\end{tikzcd}
\]
\\[0cm]
\caption{}
\label{diagram compatibility of GlK1alg with DLKA with all the sections} 
\end{figure}

We will prove that $\sigma (\beta)$ does not depend on the choice of $\alpha,$ $\sigma$ is a well defined 
homomorphism which splits $j.$ Indeed, if $d (\alpha) = 0,$ then 
$\pi (d (\alpha)) = 0.$ From Diagram~\ref{diagram compatibility of GlK1alg with DLKA with all the sections} we read that
$\pi_{\ell} (\alpha) = 0.$ Hence $\alpha = j_{\ell} (\alpha_{0})$ for some $\alpha_{0} \in 
\pi_{0}((G_{\ell, K, 1}^{\alg})^{\id}).$ Hence 
$$j \circ d^{\id} \circ \sigma_{\ell} (\alpha) = j \circ d^{\id} \circ \sigma_{\ell} \circ j_{\ell} (\alpha_{0})
= j \circ d^{\id} ((\alpha_{0})) = d \circ j_{\ell} (\alpha_{0}) = d(\alpha) = 0.$$
Because $j$ is a monomorphism then $\sigma (0) = d^{\id} \circ \sigma_{\ell} (\alpha) = 0.$
Hence $\sigma (\beta)$ does not depend on $\alpha.$ In the same way as above we prove that $\sigma$ is 
a homomorphism.
\medskip

Let us verify that $\sigma$ splits $j.$ Indeed from the property of Diagram~\ref{diagram compatibility of GlK1alg with DLKA with all the sections} and the assumption that $d$ is an epimorphism, it is clear that $d^{\id}$ is an epimorphism. Since $\sigma_\ell$ is an epimorphism, $\sigma$ is an epimorphism as well. Moreover, from the definition of $\sigma$ we obtain $\sigma \circ d =
d^{\id} \circ \sigma_\ell.$ Now observe that 
$$\sigma \circ j \circ d^{\id} \circ \sigma_\ell = \sigma \circ d \circ j_\ell \circ \sigma_\ell
= d^{\id} \circ \sigma_\ell \circ j_\ell \circ \sigma_\ell =  d^{\id} \circ \sigma_\ell.$$
Therefore, for all $\alpha \in \pi_{0} (\GlKo)$ we obtain that
$$
\sigma \circ j \circ d^{\id} \circ \sigma_\ell(\alpha)=d^{\id} \circ \sigma_\ell(\alpha),
$$
and hence, since $d^{\id} \circ \sigma_\ell$ is an epimorphism we obtain that $\sigma \circ j=\id$. This implies that $\sigma$ splits $j$. Thus, the homomorphism 
$$
(\sigma , \pi): \,  \pi_{0} (\gpDL_{K}(A)) \rightarrow \pi_{0} (\gpDL_{K}^{\id} (A))  \times \Gal(K_{e}/K)
$$
gives the splitting~\eqref{DLK splits into DLKid times GKeK}.
\end{proof}

We can deduce Theorem~\ref{th: theorem intro} from Theorem~\ref{th: 3.4}. In addition, notice that under the assumption that $A$ is fully of Lefschetz type we have the following isomorphisms 
\begin{equation}
	\pi_0(\gpDL_{K}(A)) \simeq \pi_0(\GlKo_{\Ql}) \simeq\pi_0(\AST_K(A)_{\Ql}) \simeq \pi_0(\ST_K(A)).
\end{equation}
Notice also that $\pi_0(\gpDL_{K}(A)_{\Ql}) = \pi_0(\gpDL_{K}(A)).$
Moreover, Theorem~\ref{th: 3.4} allows us to obtain the following corollary which enable us to determine concretely the component group of $\gpDL_{K}(A)_{\Ql}$ in terms the component group of $\gpDL_K^{id}(A)$ and $\Gal(K_e/K)$.

\begin{corollary}
Under the hypotheses of Theorem~\ref{th: 3.4} and the assumption that $A$ is fully of Lefschetz type, we have the following isomorphisms:
\begin{equation}
	\pi_0(\gpDL_{K}(A)) \simeq \pi_0(\ST_K(A))  \simeq \pi_0 \gpDL_K^{id}(A) \times \Gal(K_e/K).
\label{computing pi0 DLK A - general case}
\end{equation}
In particular if $\gpDL_K^{id}(A)$ is connected then we have 
\begin{equation}
	\pi_0(\gpDL_{K}(A)) \simeq \pi_0(\ST_K(A)) \simeq \Gal(K_e/K),
	\label{computing pi0 DLK A - special case}
\end{equation}
\end{corollary}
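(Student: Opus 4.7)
The plan is to combine the direct product decomposition of Theorem~\ref{th: 3.4} with a careful comparison of identity components forced by the fully of Lefschetz type hypothesis, in order to upgrade that decomposition from $\gpDL_K(A)$ to the Sato--Tate group $\ST_K(A)$. First I would apply Theorem~\ref{th: 3.4} directly, using both hypotheses (epimorphism $d$ and splitting $\sigma_\ell$) to obtain the identification $\pi_0(\gpDL_K(A)) \simeq \pi_0(\gpDL_K^{id}(A)) \times \Gal(K_e/K)$, which is the Lefschetz half of~\eqref{computing pi0 DLK A - general case}.

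Next, I would argue that under the additional fully of Lefschetz type hypothesis, the map $d$ of Diagram~\ref{diagram compatibility of GlK1alg with DLKA with the sections} is actually an isomorphism, not merely an epimorphism. The map $d$ is induced by the tautological inclusion $\GlKo \subseteq \gpDL_K(A)_{\Ql}$ from~\eqref{GlK1alg subset DLKVpsiAl}, so its injectivity is equivalent to the identity components coinciding. By the fully of Lefschetz type assumption, \eqref{MT-conjecture via Hg(A)} furnishes the chain $\GlKo^\circ \simeq \Hg(A)_{\Ql} \simeq \mathcal{L}(A)_{\Ql}$, while Remark~\ref{rmk: relations TLG}(2)--(3) identifies $\mathcal{L}(A)$ with $\gpDL_K(A)^\circ$. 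Since both $\GlKo^\circ$ and $\gpDL_K(A)^\circ_{\Ql}$ are connected algebraic groups over $\Ql$ of the same dimension, with the former contained in the latter, they must coincide, yielding injectivity of $d$.

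Finally, Serre's chain of isomorphisms from Remark~\ref{remark-isomorphism without assumption AST}, namely $\pi_0(\ST_K(A)) \simeq \pi_0(\GlKo)$, combined with the isomorphism $d$ and Theorem~\ref{th: 3.4}, gives
$$\pi_0(\ST_K(A)) \simeq \pi_0(\GlKo) \simeq \pi_0(\gpDL_K(A)) \simeq \pi_0(\gpDL_K^{id}(A)) \times \Gal(K_e/K),$$
which is~\eqref{computing pi0 DLK A - general case}. The ``in particular'' statement~\eqref{computing pi0 DLK A - special case} follows immediately: when $\gpDL_K^{id}(A)$ is connected, $\pi_0(\gpDL_K^{id}(A))$ is trivial and the direct product collapses to $\Gal(K_e/K)$. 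The only genuinely nontrivial step is the promotion of $d$ from epimorphism to isomorphism via the dimension comparison of connected algebraic groups; this is where the fully of Lefschetz type hypothesis does its essential work, and it is the main obstacle one has to overcome before the rest of the argument becomes bookkeeping with the commutative diagram and Serre's compact-form identification.
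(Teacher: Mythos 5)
Your proposal is correct and follows essentially the same route as the paper: apply Theorem~\ref{th: 3.4} for the direct product decomposition, and use the fully of Lefschetz type hypothesis to identify $\pi_0(\gpDL_K(A))$ with $\pi_0(\GlKo)$ and hence with $\pi_0(\ST_K(A))$. The paper simply asserts this chain of isomorphisms (citing its earlier remarks and \cite{CC2019}), whereas you explicitly justify the injectivity of $d$ via the coincidence of identity components $\GlKo{}^{\circ}=\mathcal{L}(A)_{\Ql}=\gpDL_K(A)^{\circ}_{\Ql}$ --- a worthwhile detail, with the minor caveat that injectivity of $\pi_0$ of an inclusion $H\subseteq G$ is equivalent to $H\cap G^{\circ}=H^{\circ}$ rather than to $H^{\circ}=G^{\circ}$, though the implication you actually use is valid.
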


We can reformulate the above-mentioned corollary in terms of the endomorphism algebra of a simple abelian variety: 

\begin{corollary}
Under the hypotheses of Theorem~\ref{th: 3.4} an the assumption that $A$ is a simple abelian variety, fully of Lefschetz type, we have the following isomorphisms:
\begin{enumerate}
\item If $A$ is of type I, II or IV - in the sense of Albert's classification - we have:
\begin{equation}
\pi_0(\gpDL_{K}(A)) \simeq \pi_0(\ST_K(A)) \simeq \Gal(K_e/K).	
\end{equation}
\item If $A$ is of type III - in the sense of Albert's classification - we have:
\begin{equation}		
\pi_0(\gpDL_{K}(A)) \simeq \pi_0(\ST_K(A))  \simeq \pi_0 \gpDL_K^{id}(A) \times \Gal(K_e/K).
\end{equation}
\end{enumerate}
\end{corollary}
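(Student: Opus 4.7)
The plan is to bootstrap off the preceding corollary, which (under the stated hypotheses) already gives the direct product decomposition
\[
\pi_0(\gpDL_K(A)) \;\simeq\; \pi_0(\ST_K(A)) \;\simeq\; \pi_0(\gpDL_K^{\id}(A)) \times \Gal(K_e/K).
\]
Thus the only remaining task is to compute $\pi_0(\gpDL_K^{\id}(A))$ for a simple abelian variety $A$ in terms of its Albert type. Since $\gpDL_K^{\id}(A) = C_D(\Sp_{(V,\psi)})$ by Remark~\ref{rmk: relations TLG}(1), this is a question about the structure of the centralizer of the endomorphism algebra $D$ inside the symplectic group.

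The first step is to recall the explicit description of $\mathcal{L}(A) = \gpDL_K^{\id}(A)^{\circ}$ for a simple $A$, which is standard and is used already in \cite{BK1, BK2}. Writing $F$ for the center of $D$ and $F_0$ for its maximal totally real subfield, and using that $V$ is a $D$-module compatible with the polarization $\psi$, one gets the following shapes (after a suitable choice of basis, via restriction of scalars from $F$ or $F_0$):
\begin{itemize}
\item Type I ($D = F$ totally real): $\mathcal{L}(A)$ is a product of symplectic groups $\Sp_{2m}$;
\item Type II ($D$ a totally indefinite quaternion algebra over $F$): $\mathcal{L}(A)$ is again a product of symplectic groups;
\item Type III ($D$ a totally definite quaternion algebra over $F$): $\mathcal{L}(A)$ is a product of special orthogonal groups $\SO_{m}$;
\item Type IV ($D$ a division algebra with CM center $F$): $\mathcal{L}(A)$ is a product of unitary groups $\gpU_{m}$.
\end{itemize}
In types I, II, IV the full centralizer equals its connected component: symplectic and unitary groups over a field are connected, and the centralizer condition $g\beta g^{-1} = \beta$ for all $\beta \in D$ cuts out exactly these groups (no disconnected enlargement appears). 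In type III, on the other hand, the centralizer of a totally definite quaternion algebra acting on a symplectic space is the full orthogonal group $\gpO_{m}$ (the polarization restricts to a symmetric form on the appropriate quaternionic eigenspaces), so it has two connected components detected by the determinant.

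The second step is to read off $\pi_0(\gpDL_K^{\id}(A))$ from these descriptions and substitute into the previous corollary. In Types I, II, IV we get $\pi_0(\gpDL_K^{\id}(A)) = 1$, so
\[
\pi_0(\gpDL_K(A)) \simeq \Gal(K_e/K),
\]
and in Type III we retain the nontrivial factor $\pi_0(\gpDL_K^{\id}(A))$ in the product.

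The only real difficulty is identifying the centralizer in the Type III case as an orthogonal (rather than symplectic or unitary) group; this comes from the fact that a totally definite quaternion algebra $D \otimes_{\Q} \R$ is a product of Hamiltonian algebras, and the polarization form, when restricted to the $D$-isotypic decomposition of $V_{\R}$, becomes symmetric instead of alternating on the quaternionic commutant. Once that is granted, the rest of the proof is bookkeeping: combine the preceding corollary with the four explicit descriptions above to conclude.
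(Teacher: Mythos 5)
Your proposal is correct and follows essentially the same route as the paper: both reduce to the preceding corollary's decomposition $\pi_0(\gpDL_K(A)) \simeq \pi_0(\gpDL_K^{\id}(A)) \times \Gal(K_e/K)$ and then determine $\pi_0(\gpDL_K^{\id}(A)) = \pi_0(C_D(\Sp_{(V,\psi)}))$ type by type, with types I, II, IV giving connected (symplectic/unitary) centralizers and type III giving a product of full orthogonal groups. The only difference is presentational: the paper cites the table in Milne's \emph{Lefschetz classes on abelian varieties} for these centralizer computations (recording that in type III one gets $e$ copies of $\gpO_{2h}$), whereas you sketch the underlying argument that the polarization becomes symmetric on the quaternionic commutant.
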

\begin{proof}
	Let us remark that the group $\gpDL_{K}^{id}(A)$ is connected for all simple abelian varieties of type I, II and IV and it is not connected for simple abelian varieties of type III, in the sense of Albert's classification. For instance, see the table at the end of~\cite[\S 2]{MilneLC} for further details. Indeed, $\gpDL_{K}^{id}(A)=C_D(\Sp)$ which corresponds to the reductive group $S(A)$ according to the notations of Milne. In that setting, when the group $\gpDL_{K}^{id}(A)$ is not connected we know that $\gpDL_{K}^{id}(A)$ is several copies of the orthogonal group. More precisely, recall that in type III, the dimension of $A$ is given by $\dim(A)=g=2eh$, where $h$ is the relative dimension of $A$. Therefore, the group $\gpDL_{K}^{id}(A)$ is given by $e$ copies of the orthogonal group $\rm{O}_{2h}$.
\end{proof}


\subsection{A conjecture of Serre and further results}\label{sec: subsection 3.3}

	The purpose of this section is to illustrate the fact that we can obtain similar diagrams as Diagram~\ref{diagram compatibility of GlK1alg with DLKA} but with the algebraic Sato--Tate group and the Sato--Tate group of the abelian variety $A$ introduced in Section~\ref{sec: subsection 3.1}. Similarly to~\eqref{eq:eq 2.8} we define the following objects	for every $\tau\in\Gal(K_e/K):$
\begin{equation}
	\begin{aligned}
		\AST_{K}^{\tau}(A) &:= \AST_{K}(A) \cap \gpDL_{K}^{\tau}(A),\\
		\ST_{K}^{\tau}(A) &:= \ST_{K}(A) \cap \AST_{K}^{\tau}(A) (\C).
	\end{aligned}
\end{equation}
\medskip

\noindent
Comparing this definition with the one of $\gpDL_{K}(A)$ we obtain:
\begin{equation}
	\AST_{K}(A)  \,\,\, = \bigsqcup_{\tau \in \Gal(K_{e}/K)} \,\, \AST_{K}^{\tau}(A),
	\label{decomposition into twisted Lefschetz for fixed elements}
\end{equation} 
\begin{equation}
	\ST_{K}(A) \,\,\, = \bigsqcup_{\tau \in \Gal(K_{e}/K)} \,\, \ST_{K}^{\tau}(A).
	\label{decomposition of AST into twisted forms for fixed elements}
\end{equation} 
Recall from Conjecture~\ref{conj: alg. Sato-Tate} that, for abelian varieties $A$ defined over a number field $K$, it is known that for every primer $\ell$, there exists a natural-in-$K$ reductive algebraic group $\AST_{K}(A)\subset \Sp_{(V,\psi)}$ defined over $\Q$ and a natural-in-$K$ monomorphism 
$\gpast_{\ell, K} \,\, : \,\,   \GlKo  \,\, {\stackrel{}{\hookrightarrow}} \,\, \AST_K(A)_{\Ql}$
(see~\eqref{Algebraic Sato-Tate monomorphism} in Conjecture~\ref{conj: alg. Sato-Tate}). Hence we have the following commutative diagram.
\begin{figure}[H]
\[
\begin{tikzcd}
1 \arrow{r}{} & \,\, \pi_{0}((\GlKo)^{\id}) \arrow{d}[swap]{d^{\id}} \arrow{r}{j_\ell}  & 
\pi_{0} (\GlKo)  \arrow{d}[swap]{d} \arrow{r}{\pi_\ell} & \Gal(K_{e}/K) 
\arrow{d}{=}[swap]{} \arrow{r}{} & 1 \\ 
1 \arrow{r}{} & \pi_{0}(\AST_{K}^{\id}(A))  \arrow{r}{j} & 
\pi_{0} (\AST_{K}(A)) \arrow{r}{\pi} & \Gal(K_{e}/K) \arrow{r}{} & 1\\
\end{tikzcd}
\]
\\[-0.5cm]
\caption{}
\label{diagram compatibility of GlK1alg with ASTKA} 
\end{figure}
\noindent 
Since $\AST_{K} (M)^{\circ} (\C)$ is a connected complex Lie group and any maximal compact subgroup of a connected complex Lie group is a connected real Lie group we have a natural isomorphism:
\begin{equation}
	\pi_{0} (\AST_{K}(A)) \cong \pi_{0} (\ST_{K}(A)).
	\label{po(AST) = po(ST)}
\end{equation} 
This isomorphism allow us to obtain the following commutative diagram:
\begin{figure}[H]
\[
\begin{tikzcd}
1 \arrow{r}{} & \,\, \pi_{0}((\GlKo)^{\id}) \arrow{d}[swap]{d^{\id}} \arrow{r}{j_\ell}  & 
\pi_{0} (\GlK)  \arrow{d} {}[swap]{d} \arrow{r}{\pi_\ell} & \Gal(K_{e}/K) 
\arrow{d}{=}[swap]{} \arrow{r}{} & 1 \\ 
1 \arrow{r}{} & \pi_{0}(\ST_{K}^{\id}(A))  \arrow{r}{j} & 
\pi_{0} (\ST_{K}(A)) \arrow{r}{\pi} & \Gal(K_{e}/K) \arrow{r}{} & 1\\
\end{tikzcd}
\]
\\[-0.5cm]
\caption{}
\label{diagram compatibility of GlK1alg with STKA} 
\end{figure}

Moreover, computations and the diagram in~\cite[p. 27]{BK2} and~\cite[Thm. 11.8, (11.12)]{BK2} allow us to obtain the following commutative diagram:
\begin{figure}[H]
\[
\begin{tikzcd}
\pi_{0}(\GlKo) \arrow{d}[swap]{\cong} \arrow{r}{d}  & 
\pi_{0} (\AST_{K}(A))  \arrow{d}[swap]{\cong}\\ 
\pi_{0}(\GlK)  \arrow{r}{} & 
\pi_{0} ( \MMT_{K}(A) )\\
\end{tikzcd}
\]
\\[-0.5cm]
\caption{}
\label{diagram comparing maps GlK1alg to ASTKA and GlKalg to MMTKA} 
\end{figure}
\noindent
Serre conjectured~\cite[9.2? p. 386]{Se94} that the bottom horizontal arrow in the Diagram~\ref{diagram comparing maps GlK1alg to ASTKA and GlKalg to MMTKA} is an epimorphism and this conjecture can be rephrased  as follows: 

\begin{conjecture} (J.-P. Serre)
	The following natural map is an epimorphism:
	\begin{equation}
		d: \pi_{0} (\GlKo)  \rightarrow \pi_{0} (\AST_{K}(A))
		\label{The map from GlK1alg to ASTKA}
	\end{equation} 
	\label{Serre Conjecture on image of Galois meeting all components}
\end{conjecture}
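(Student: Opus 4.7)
The plan is to attempt Serre's conjecture by combining the twist decompositions assembled in Section~\ref{sec: section 3} with the algebraic Sato--Tate conjecture and a Chebotarev-style density argument, then to reduce to a statement about a single twisted piece. Because both rows of Diagram~\ref{diagram compatibility of GlK1alg with ASTKA} are exact with the same quotient $\Gal(K_e/K)$, the snake lemma tells me that $d$ is surjective if and only if $d^{\id}\colon \pi_0((\GlKo)^{\id}) \to \pi_0(\AST_K^{\id}(A))$ is surjective. So my first step is to make this reduction explicit and shift the whole question to the identity twist.

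Next, using Remark~\ref{rmk: relations TLG}(4) and the equality $(\GlKo)^{\id} = \GlKeo$ from~\eqref{GlKo id = GlKeo}, together with $\AST_K^{\id}(A) = \AST_K(A) \cap \gpDL_K^{\id}(A) = \AST_{K_e}(A)$, I would replace the base field by the endomorphism field $K_e$. That turns the problem into the statement that $\pi_0(\GlKeo) \to \pi_0(\AST_{K_e}(A))$ is surjective, i.e.\ Serre's conjecture over a field where all endomorphisms are rational. After this reduction the issue is purely about the \emph{identity component twist}, and the naturality-in-$K$ of the Algebraic Sato--Tate monomorphism~\eqref{Algebraic Sato-Tate monomorphism} means nothing is lost.

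To attack the reduced statement I would then invoke Conjecture~\ref{conj: alg. Sato-Tate}(b): if the monomorphism $\gpast_{\ell,K_e}$ is an isomorphism, then $d^{\id}$ is an isomorphism on $\pi_0$ and we are done. Where AST(b) is not available, the backup plan is Chebotarev: lift any element of $\pi_0(\AST_{K_e}(A))$ to a geometric Frobenius class $\rho_\ell(\mathrm{Frob}_\mathfrak{p}) \in (\GlK)^{\id}$ of some prime $\mathfrak{p}$ of $K_e$ that splits completely in $K_e/K$, and show its image in $\pi_0(\AST_{K_e}(A))$ realizes the chosen component. This is the Frobenius-meets-every-component heuristic underlying Serre's original formulation.

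The main obstacle is precisely the last step: in the absence of AST(b), one has no \textit{a priori} control over which components of $\AST_{K_e}(A)$ are hit by Frobenius elements, and a circular appeal to the (generalized) Sato--Tate equidistribution is not allowed. A genuine advance would require exhibiting, for each component of $\AST_{K_e}(A)$, a motivic/algebraic cycle class whose Galois twist forces $\rho_\ell$ to sweep out that component --- essentially a structural input beyond what is provided by Diagrams~\ref{diagram compatibility of GlK1alg with ASTKA}--\ref{diagram comparing maps GlK1alg to ASTKA and GlKalg to MMTKA}. I expect that without either AST(b) or new geometric information about endomorphism twists, only the reductions outlined in the first two paragraphs are within reach by the methods of this paper, and the residual surjectivity statement in the identity twist is genuinely the crux.
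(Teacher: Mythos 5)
The item you are asked to prove is Conjecture~\ref{Serre Conjecture on image of Galois meeting all components}: it is stated in the paper as an \emph{open conjecture} of Serre, and the paper offers no proof of it. The only proof-adjacent content in the paper is Remark~3.10 (in Section~\ref{sec: subsection 3.3}), namely the trivial observation that the Algebraic Sato--Tate Conjecture~\ref{conj: alg. Sato-Tate} implies the Weak Algebraic Sato--Tate Conjecture~\ref{Weak Algebraic Sato-Tate Conjecture}, which in turn implies the surjectivity of $d$. Your only complete route to the statement is exactly this implication (``invoke Conjecture~\ref{conj: alg. Sato-Tate}(b)''), i.e.\ you assume a strictly stronger open conjecture. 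That is not a proof, and you correctly concede in your final paragraph that the residual surjectivity statement is the genuine crux and is out of reach by these methods. So the honest verdict is: there is nothing to compare, because neither the paper nor your proposal proves the statement.

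On the partial content: your reduction of the surjectivity of $d$ to that of $d^{\id}$ is essentially right, but it is a direct diagram chase on the two exact rows of Diagram~\ref{diagram compatibility of GlK1alg with ASTKA} with identity on the $\Gal(K_e/K)$ column (the groups are non-abelian, so ``snake lemma'' is not literally applicable); the paper itself records one direction of this for the Lefschetz version right after Diagram~\ref{diagram compatibility of GlK1alg with DLKA}. The further identification $\AST_K^{\id}(A)=\AST_{K_e}(A)$ that you use to change the base field is plausible from naturality-in-$K$ but is nowhere established in the paper, so even the reduction step rests on an unproved compatibility. Finally, the Chebotarev/Frobenius step fails for the reason you name: without knowing a priori that Frobenius images meet every component of $\AST_{K_e}(A)$ --- which is precisely the content of the conjecture --- the argument is circular. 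In short: the reductions are reasonable bookkeeping, but no proof of Conjecture~\ref{Serre Conjecture on image of Galois meeting all components} is obtained, nor does the paper claim one.
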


\noindent
One can strengthen Serre's Conjecture~\ref{Serre Conjecture on image of Galois meeting all components} to
expect more:

\begin{conjecture} (Weak Algebraic Sato--Tate Conjecture)
	The map~\eqref{The map from GlK1alg to ASTKA} is an isomorphism.
	\label{Weak Algebraic Sato-Tate Conjecture}
\end{conjecture}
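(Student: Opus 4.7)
The plan is to reduce the conjecture to an identity-twist statement over the endomorphism field $K_e$, and then to invoke the theorem of Cantoral Farf\'an--Commelin~\cite{CC2019} which upgrades the monomorphism $\gpast_{\ell,K}$ of Conjecture~\ref{conj: alg. Sato-Tate}(a) to an isomorphism of algebraic groups under the Mumford--Tate conjecture. Concretely, the twisted decompositions~\eqref{decomposition of Glk1 into twists} and~\eqref{decomposition into twisted Lefschetz for fixed elements} are indexed by the same set $\Gal(K_e/K)$, and the unconditional monomorphism $\gpast_{\ell,K}$ respects them; this compatibility is encoded by Diagram~\ref{diagram compatibility of GlK1alg with ASTKA}, in which the right-hand vertical arrow is the identity. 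The five lemma then reduces the claim that $d$ is an isomorphism to the corresponding statement for the identity-twist part, namely that
\begin{equation*}
d^{\id}:\pi_{0}((\GlKo)^{\id})\longrightarrow \pi_{0}(\AST_{K}^{\id}(A))
\end{equation*}
is an isomorphism.

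Next, the equality~\eqref{GlKo id = GlKeo} identifies the source with $\pi_{0}(\GlKeo)$, while naturality of $\AST$ in $K$ together with Remark~\ref{rmk: relations TLG}(4) identifies the target with $\pi_{0}(\AST_{K_e}(A))$. The problem is thereby reduced to the weak ASC over the endomorphism field $K_e$, where the Galois action on $D$ is trivial and the twisted decomposition collapses. At this stage I would invoke~\cite{CC2019}: granting the Mumford--Tate conjecture for $A$ (which is insensitive to the finite base change $K\rightsquigarrow K_e$), part (b) of Conjecture~\ref{conj: alg. Sato-Tate} holds for $A$ over $K_e$, so $\gpast_{\ell,K_e}$ is an isomorphism of $\Ql$-algebraic groups $\GlKeo \simeq \AST_{K_e}(A)_{\Ql}$. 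Applying $\pi_{0}$, which commutes with the base change from $\Q$ to $\Ql$ since the component group scheme of $\AST_{K_e}(A)$ is \'etale over $\Q$, yields the desired isomorphism $\pi_{0}(\GlKeo) \simeq \pi_{0}(\AST_{K_e}(A))$.

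The main obstacle is of course the Mumford--Tate conjecture itself, which remains open in full generality. An unconditional route would have to exhibit enough Frobenius conjugacy classes inside $\GlKeo$ to meet every geometric component of $\AST_{K_e}(A)$ (the surjectivity of $d$, i.e. Serre's Conjecture~\ref{Serre Conjecture on image of Galois meeting all components}), and simultaneously rule out the possibility that distinct components of $\GlKeo$ collapse into a single component of $\AST_{K_e}(A)_{\Ql}$ (injectivity on $\pi_{0}$). A plausible strategy would combine an $\ell$-independent Chebotarev-type argument with a case-by-case analysis according to Albert type: for types I, II, IV the group $\gpDL_{K}^{\id}(A)$ is connected and injectivity of $d^{\id}$ is automatic, while type III --- where $\gpDL_{K}^{\id}(A)$ decomposes into several orthogonal factors, as in the corollary above --- is the essential difficulty and precisely the setting the preceding results of this paper were designed to illuminate.
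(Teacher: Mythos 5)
The statement you are addressing is a \emph{conjecture}, and the paper offers no proof of it: the only relevant content in the text is the observation that the full Algebraic Sato--Tate Conjecture~\ref{conj: alg. Sato-Tate} trivially implies the weak version, together with the earlier remark that, by~\cite{CC2019}, the Mumford--Tate conjecture for $A$ implies Conjecture~\ref{conj: alg. Sato-Tate}. Your argument is exactly this implication chain, and as you yourself concede in the final paragraph, it is conditional on the Mumford--Tate conjecture, which is open in general. So what you have written is a (correct) conditional reduction, not a proof; the ``unconditional route'' you sketch at the end is a programme, not an argument, and in particular the injectivity of $d$ on component groups is precisely the content of the conjecture rather than something a Chebotarev argument is known to deliver.

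Within the conditional part, your route is also more circuitous than necessary. Once you grant the Mumford--Tate conjecture and invoke~\cite{CC2019}, part (b) of Conjecture~\ref{conj: alg. Sato-Tate} already gives an isomorphism of $\Ql$-group schemes $\gpast_{\ell,K}\colon \GlKo \stackrel{\simeq}{\to} \AST_K(A)_{\Ql}$ over $K$ itself; applying $\pi_0$ immediately shows that the map~\eqref{The map from GlK1alg to ASTKA} is an isomorphism. The detour through the five lemma, the identity-twist component, and the base change to $K_e$ adds nothing and introduces points that would need justification (commutativity of Diagram~\ref{diagram compatibility of GlK1alg with ASTKA} with the identifications $(\GlKo)^{\id}=\GlKeo$ and $\AST_K^{\id}(A)=\AST_{K_e}(A)$, the latter being a naturality statement you assert but do not check). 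None of this rescues the essential gap: absent the Mumford--Tate conjecture, the statement remains what the paper says it is --- a conjecture.
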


\begin{remark}
	It is obvious that the Algebraic Sato--Tate Conjecture~\ref{conj: alg. Sato-Tate} implies Weak Algebraic Sato--Tate Conjecture~\ref{Weak Algebraic Sato-Tate Conjecture}.
\end{remark}

Similarly as for Theorem~\ref{th: 3.4} we can determine $\pi_{0} (\AST_{K}(A))$ (resp. $\pi_{0} (\ST_{K}(A))$) in terms of $\Gal(K_e/K)$ and $\pi_{0} (\AST_{K}^{\id}(A))$ (resp. $\pi_{0} (\ST_{K}^{\id}(A))$).

\begin{theorem}
	Assume that Serre's Conjecture~\ref{Serre Conjecture on image of Galois meeting all components} holds and 
there exists $\sigma_{\ell}$ that splits $j_{\ell}.$ Then, there is $\sigma$ that splits $j$ in each of Diagrams~\ref{diagram compatibility of GlK1alg with ASTKA} and~\ref{diagram compatibility of GlK1alg with STKA} and we have the following direct product decompositions:
	\begin{equation}
		\pi_{0} (\AST_{K}(A))= \pi_{0} (\AST_{K}^{\id}(A))  \times \Gal(K_{e}/K),
		\label{ASTK splits into ASTKid times GKeK}
	\end{equation}
	\begin{equation}
		\pi_{0} (\ST_{K}(A))= \pi_{0} (\ST_{K}^{\id}(A))  \times \Gal(K_{e}/K).
		\label{STK splits into STKid times GKeK}
	\end{equation}
\label{Conditions for ASTK splitting into ASTKid times GKeK and STK splitting into STKid times GKeK}
\end{theorem}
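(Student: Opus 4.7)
The plan is to imitate the proof of Theorem~\ref{th: 3.4} essentially verbatim, since Serre's Conjecture~\ref{Serre Conjecture on image of Galois meeting all components} supplies exactly the epimorphism hypothesis on $d$ that was needed in that earlier argument. Working now with Diagram~\ref{diagram compatibility of GlK1alg with ASTKA}, I would first promote the epimorphism of the middle vertical arrow $d \colon \pi_0(\GlKo)\to \pi_0(\AST_K(A))$ to an epimorphism of the left vertical arrow $d^{\id}\colon \pi_0((\GlKo)^{\id}) \to \pi_0(\AST_K^{\id}(A))$: given $\beta_0 \in \pi_0(\AST_K^{\id}(A))$ apply $d$-surjectivity to $j(\beta_0)$ to get $\alpha$, then use $\pi_\ell(\alpha)=\pi(j(\beta_0))=0$ to write $\alpha = j_\ell(\alpha_0)$, and check $d^{\id}(\alpha_0)=\beta_0$ by injectivity of $j$.

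Next, for $\beta \in \pi_0(\AST_K(A))$ I pick any $\alpha \in \pi_0(\GlKo)$ with $d(\alpha)=\beta$ and define
\[
\sigma(\beta) := d^{\id} \circ \sigma_\ell(\alpha).
\]
Well-definedness is the same diagram chase as in Theorem~\ref{th: 3.4}: if $d(\alpha)=0$ then $\pi_\ell(\alpha)=0$, hence $\alpha = j_\ell(\alpha_0)$, and applying $j$ yields $j \circ d^{\id}\circ \sigma_\ell(\alpha) = d(\alpha) = 0$, so $d^{\id}\circ \sigma_\ell(\alpha)=0$ because $j$ is a monomorphism. The homomorphism property of $\sigma$ follows in the same spirit, and the identity
\[
\sigma \circ j \circ d^{\id}\circ \sigma_\ell \;=\; \sigma \circ d \circ j_\ell \circ \sigma_\ell \;=\; d^{\id}\circ \sigma_\ell \circ j_\ell \circ \sigma_\ell \;=\; d^{\id}\circ \sigma_\ell,
\]
combined with the surjectivity of $d^{\id}\circ \sigma_\ell$, gives $\sigma \circ j = \id$, so $\sigma$ splits $j$. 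The direct product decomposition~\eqref{ASTK splits into ASTKid times GKeK} then comes from the map $(\sigma,\pi)\colon \pi_0(\AST_K(A))\to \pi_0(\AST_K^{\id}(A))\times \Gal(K_e/K)$.

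For the Sato--Tate statement~\eqref{STK splits into STKid times GKeK}, I would transfer the splitting via the canonical bijection $\pi_0(\AST_K(A))\cong \pi_0(\ST_K(A))$ recorded in~\eqref{po(AST) = po(ST)}. The decompositions~\eqref{decomposition into twisted Lefschetz for fixed elements} and~\eqref{decomposition of AST into twisted forms for fixed elements} into connected-coset unions indexed by $\Gal(K_e/K)$ are compatible, so the same argument that produces~\eqref{po(AST) = po(ST)} applies component-by-component, yielding $\pi_0(\AST_K^{\id}(A))\cong \pi_0(\ST_K^{\id}(A))$. This lets one pull back $\sigma$ to a splitting of $j$ in Diagram~\ref{diagram compatibility of GlK1alg with STKA}.

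The step I expect to require the most care is precisely this last transfer to the Sato--Tate setting, namely verifying that the maximal-compact construction respects the twist decomposition so that the bijection $\pi_0(\AST_K(A))\cong \pi_0(\ST_K(A))$ restricts to identity-components as $\pi_0(\AST_K^{\id}(A))\cong \pi_0(\ST_K^{\id}(A))$. Everything else is a formal diagram chase that copies Theorem~\ref{th: 3.4} with $\gpDL_K(A)$ replaced by $\AST_K(A)$, Serre's Conjecture~\ref{Serre Conjecture on image of Galois meeting all components} playing the role of the epimorphism assumption on $d$.
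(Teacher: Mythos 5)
Your proposal is correct and matches the paper's approach exactly: the paper's own proof consists of the single sentence that the argument is the same as that of Theorem~\ref{Conditions for DLK splitting into DLKid times GKeK}, which is precisely the diagram chase you reproduce. You in fact supply more detail than the paper does, notably the promotion of surjectivity from $d$ to $d^{\id}$ and the transfer of the splitting through the isomorphism $\pi_{0}(\AST_{K}(A))\cong\pi_{0}(\ST_{K}(A))$, both of which are sound.
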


\begin{proof}
The proof is the same as the proof of Theorem~\ref{Conditions for DLK splitting into DLKid times GKeK}. 
\end{proof}

\begin{corollary}
	Assume that the Weak Algebraic Sato--Tate Conjecture~\ref{Weak Algebraic Sato-Tate Conjecture} holds and 
that there exists $\sigma_{\ell}$ splits $j_{\ell}.$  Then, we have the following direct products decompositions:
$$
\pi_{0} (\AST_{K}(A))= \pi_{0} (\AST_{K}^{\id}(A))  \times \Gal(K_{e}/K),
$$
$$
\pi_{0} (\ST_{K}(A))= \pi_{0} (\ST_{K}^{\id}(A))  \times \Gal(K_{e}/K).
$$
\end{corollary}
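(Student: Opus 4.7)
The plan is to derive this corollary as an essentially immediate consequence of Theorem~\ref{Conditions for ASTK splitting into ASTKid times GKeK and STK splitting into STKid times GKeK}. The only real task is to verify that the hypotheses of the Weak Algebraic Sato--Tate Conjecture~\ref{Weak Algebraic Sato-Tate Conjecture} imply the hypotheses of that theorem.

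First I would observe that the Weak Algebraic Sato--Tate Conjecture states that the natural map
\[
d : \pi_{0} (\GlKo) \longrightarrow \pi_{0} (\AST_{K}(A))
\]
is an isomorphism. In particular, $d$ is surjective, which is precisely the content of Serre's Conjecture~\ref{Serre Conjecture on image of Galois meeting all components}. Thus, whenever the Weak Algebraic Sato--Tate Conjecture holds, Serre's Conjecture holds as well.

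Next, I would simply combine this observation with the remaining hypothesis that there exists $\sigma_{\ell}$ splitting $j_{\ell}$. Applying Theorem~\ref{Conditions for ASTK splitting into ASTKid times GKeK and STK splitting into STKid times GKeK} produces a splitting $\sigma$ of $j$ in both Diagrams~\ref{diagram compatibility of GlK1alg with ASTKA} and~\ref{diagram compatibility of GlK1alg with STKA}, together with the two desired direct product decompositions~\eqref{ASTK splits into ASTKid times GKeK} and~\eqref{STK splits into STKid times GKeK}. The isomorphism $\pi_{0} (\AST_{K}(A)) \cong \pi_{0} (\ST_{K}(A))$ from~\eqref{po(AST) = po(ST)} (and its analogue for the identity-twist components) is implicitly used to pass between the two statements, but this has already been absorbed into the cited theorem.

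There is no genuine obstacle here: the corollary is a straightforward specialization of the theorem, with the single nontrivial remark being that ``isomorphism implies epimorphism'' converts the Weak AST hypothesis into the hypothesis of Serre's Conjecture that the theorem actually requires.
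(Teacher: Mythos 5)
Your proposal is correct and matches the paper's own argument, which simply cites Theorem~\ref{Conditions for ASTK splitting into ASTKid times GKeK and STK splitting into STKid times GKeK}; your added observation that the Weak Algebraic Sato--Tate Conjecture (an isomorphism) implies Serre's Conjecture (an epimorphism) is exactly the implicit reduction the paper relies on.
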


\begin{proof} It follows from Theorem~\ref{Conditions for ASTK splitting into ASTKid times GKeK and STK splitting into STKid times GKeK}.
\end{proof}

\appendix
\section{Remarks on splitting exact sequences}\label{sec: section 4}

The goal of this appendix is to establish some remarks regarding the splitting behavior of an exact sequence in the case of both, arbitrary groups and abelian groups. Recall that in section~\ref{sec: subsection 3.2} we studied this behavior for non-abelian groups.

\subsection{Arbitrary groups}
Consider the following exact sequence of groups with a splitting homomorphism $\sigma$ i.e.
$\sigma \circ j = \text{Id}_{K}.$ 
\begin{figure}[H]
\[
\begin{tikzcd}
    1\arrow{r} & K \arrow{r}{j} & G \arrow{r}{\pi}\arrow[bend left=33]{l}{\sigma} & C \arrow{r} & 1
\end{tikzcd}
\]
\\[0.0cm]
\caption{}
\label{splitting exact sequence non-abelian case} 
\end{figure}
Splitting $\sigma$ gives natural isomorphism:
\begin{equation}
	\begin{aligned}
		(\sigma, \pi): \, G \, &\to \, K \times C \\
		g & \mapsto (\sigma, \pi) (g) = (\sigma(g), \pi (g)).
	\end{aligned}
\end{equation}

Indeed, if $(\sigma(g), \pi (g)) = (1, 1)$ then there is $k \in K$ such that $j (k) = g.$ Hence $1 = \sigma (g) = \sigma j (k) = k.$ Therefore $g = 1$ so $(\sigma, \pi)$ is a monomorphism.
\medskip

\noindent
On the other hand take any $(k, c) \in K \times C$ and take $g \in G$ such that $\pi (g) = c.$ Then one checks that 
$$(\sigma, \pi) (\, j (k) \, g \, j (\sigma (g^{-1})) \, ) \, = \, (k, c).$$
Hence $(\sigma, \pi)$ is an epimorphism. 
\medskip

Observe that splitting $s$ of the form
\begin{figure}[H]
\[
\begin{tikzcd}
    1\arrow{r} & K \arrow{r}{j} & G \arrow{r}{\pi} & C \arrow{r} \arrow[bend left=33]{l}{s} & 1
\end{tikzcd}
\]
\\[0.0cm]
\caption{}
\label{the splitting s-the exact sequence} 
\end{figure}

\noindent
provides the action of the group $C$ on the group $K:$  

\begin{equation}
	\begin{aligned}
		C \times K &\to K\\
		(c, \, k) &\mapsto c(k):= j^{-1} (s(c) \, j(k) \, s(c^{-1}))
	\end{aligned}
\end{equation} 

\noindent
Identifying $j(K)$ with $K$ we can write this action in a simpler way: 
$$c(k) \, := \, s(c) \, k \, s(c^{-1}).$$

\noindent
This action leads directly to the following isomorphism: 

\begin{equation}
	\begin{aligned}
		(j, s) : \,  K \rtimes C \, &\to G\\
		(k,c) & \mapsto (j, s) (k, c) \, := \, j (k) \, s (c).
	\end{aligned}
\end{equation}

\noindent
Indeed $j (k) \, s (c) = 1$ implies $1 = \pi (j (k) \, s (c)) = \pi s (c)) = c.$ Hence $j (k) = 1$ so $k = 1.$
Therefore $(j, s)$ is a monomorphism. 
\medskip

\noindent
Take $g \in G$ and set $c := \pi (g).$ Observe that $\pi (g \, s (c^{-1}) = 1.$ Hence there is $k \in K$ such that
$j (k) = g \, s (c^{-1})$ so $j (k) \, s (c) = g.$ Therefore $(j, s)$ is an epimorphism. 

\begin{example} Let $F$ be a field. Consider thee following well known exact sequence with splitting 
$s$ defined as follows: 

\begin{figure}[H]
\[
\begin{tikzcd}
    1\arrow{r} & \SL_n (F)  \arrow{r}{} & \GL_{n} (F) \arrow{r}{\det} & F^{\times} \arrow{r} \arrow[bend left=33]{l}{s} & 1
\end{tikzcd}
\]
\\[0.0cm]
\caption{}
\label{matrix example of the splitting s} 
\end{figure}
\label{Example-matrix example of the splitting s}
\end{example}
$$s (a) \, := \, \begin{bmatrix}
a & 0 & 0 & \dots & 0\\
0 & 1 & 0 & \dots & 0\\
\vdots & \vdots & \vdots & \vdots & \vdots\\
0 & 0 & 0 & \dots & 1
\end{bmatrix}
$$
It is known that the exact sequence~\eqref{matrix example of the splitting s} does not split as a direct product 
for $n > 1.$
Hence there is no splitting homomorphism $\sigma$ of the embedding $\SL (F) \rightarrow \GL (F).$

\subsection{Abelian groups}

If the group $G$ in the following exact sequence is abelian, hence $K$ and $C$ are also abelian,
then the existence of the splitting homomorphism $\sigma$ is equivalent to the existence of splitting homomorphism 
$s.$ 
\begin{figure}[H]
\[
\begin{tikzcd}
    1\arrow{r} & K \arrow{r}{j} & G \arrow{r}{\pi}\arrow[bend left=33]{l}{\sigma} & C \arrow{r}
		\arrow[bend left=33]{l}{s}& 1
\end{tikzcd}
\]
\\[0.0cm]
\caption{}
\label{splitting exact sequence abelian case} 
\end{figure}

\noindent
Indeed if $s$ is a splitting homomorphism then we define: 
$$\sigma (g) := j^{-1} (g \, s (\pi (g^{-1}))).$$
Direct checking shows that $\sigma$ is a well defined splitting homomorphism of $j.$
\medskip

\noindent
On the other hand if $\sigma$ is a splitting homomorphism of $j$ then define $s$ as follows.
For $c \in C$ take $g$ such that $\pi (g) = c.$ Then put:
$$s (c) := g \, j (\sigma (g^{-1})).$$
This definition does not depend on the choice of $g.$ Indeed if $\pi (g^{\prime}) = c$ then
$$g \, j (\sigma (g^{-1})) = g^{\prime} \, j (\sigma ((g^{\prime})^{-1}))$$
is equivalent to
$$j (\sigma (g^{-1} g^{\prime})) = g^{-1} g^{\prime}$$
which holds true because $\pi (g^{-1} g^{\prime}) = 1.$ Hence $s$ is well defined. 
Direct checking shows that $s$ is a splitting homomorphism of $\pi.$


\end{document}